\theoremstyle{definition}
\newtheorem{defi}{Definition}[section]
\theoremstyle{plain}
\newtheorem{thm}[defi]{Theorem}
\newtheorem{rem}{Remark}
\newcommand{\lr}[1]{\lbrace #1 \rbrace}
\def\imod#1{\allowbreak\mkern10mu({\operator@font mod}\,\,#1)} 
\def\@setcopyright{}                                           
\def\serieslogo@{}
\begin{document}
\singlespacing

\author[D.V.A.~Briones]{Dom Vito A. ~Briones}
\address[D.V.A.~Briones, Corresponding author]{Institute of Mathematics, University of the Philippines Diliman, 1101 Quezon City, Philippines}
\email{dabriones@up.edu.ph}

\title{Association schemes on triples from affine special semilinear groups}

\begin{abstract} 
Association schemes on triples (ASTs) are 3-dimensional analogues of classical association schemes. If a group acts two-transitively on a set, the orbits of the action induced on the triple Cartesian product of that set yields an AST. By considering the actions of semidirect products of the affine special linear group $ASL(k,n)$ with subgroups of the Galois group $Gal(GF(n))$, we obtain the sizes, third valencies, and intersection numbers of the ASTs obtained from subgroups of the affine special semilinear group. 
\end{abstract}


\keywords{algebraic combinatorics, ternary algebra, association scheme on triples \\ \indent MSC Classification: 05E30}

\date{\today}

\maketitle
\singlespacing

\section{Introduction}
A classical association scheme is an algebraic-combinatorial object with certain symmetry properties. These properties suffice to afford classical association schemes with desirable structural characteristics and are pliant enough to allow classical association schemes to be applicable to several areas of mathematics. For example, the adjacency algebra of a classical association scheme is semisimple and, when the adjacency matrices define a distance-regular graph, the structure constants of this algebra can be expressed in terms of certain families of orthogonal polynomials. \cite{bannai_algebraic_1984}


Mesner and Bhattacharya introduced the notion of association schemes on triples (or ASTs), a ternary analogue for classical association schemes \cite{mesner_association_1990}. An AST on a set $\Omega$ is a partition of the triple Cartesian product $\Omega \times \Omega \times \Omega$ subject to regularity requirements paralleling the symmetry conditions for classical association schemes. In ASTs, the resulting adjacency hypermatrices produce a ternary algebra under a ternary product that extends the usual matrix multiplication.

However, the structural properties of ASTs remain unclear, partly due to the ternary adjacency algebra not being associative nor commutative. As first steps in the investigation of ASTs, studies were conducted regarding analogues of identity and inverse elements \cite{mesner_ternary_1994}, enumerations of ASTs over the smallest number of vertices \cite{balmasmall}, possible sources of ASTs such as group actions, two-graphs, designs, and other ASTs \cite{mesner_association_1990,Zealand2021,balmasurvey}, as well as the intersection numbers of known families of ASTs \cite{mesner_association_1990,balmafamily,balmasurvey}.

In particular, the actions of two-transitive groups yield ASTs \cite{mesner_association_1990}. The orbits of these actions are closely related to the parameters of the AST, providing their sizes, third valencies, and intersection numbers \cite{mesner_association_1990,balmafamily}. In fact, \cite{mesner_association_1990} provides the sizes, third valencies, and intersection numbers of the ASTs obtained from the affine general linear group $AGL(1,n)$ where $n$ is a prime power. This was extended in \cite{balmafamily}, wherein these parameters were obtained for the ASTs obtained from subgroups of the affine semilinear group $A\Gamma L(k,n)$ of the form $AGL(k,n) \rtimes H$, where $k\geq 1$ and $H\leq Gal(GF(q)$. Further work was done in \cite{balmasurvey}, where these parameters were obtained from ASTs obtained from the affine special linear group $ASL(2,n)$. 

We extend this last result by determining the sizes, third valencies, and intersection numbers of ASTs obtained from subgroups of the affine special semilinear group $ASL(k,n)\rtimes Gal(GF(n))$ of the form $ASL(k,n) \rtimes H$, where $k\geq 2$, $n$ is a prime power, and $H$ is a subgroup of $Gal(GF(n))$. In particular, we show that the ASTs obtained from $ASL_H(k,n)$ are the same as the ASTs obtained from $AG L_H(k,n) = AGL(k,n)\rtimes H$ for $k\geq 3$.

\section{Preliminaries}
We define association schemes on triples, remark how ASTs arise from two-transitive groups, and review the actions of the affine special linear and affine special semilinear groups.

\subsection{Association schemes on triples}\label{section_ast}
We define association schemes on triples and state how the parameters of an AST obtained from a two-transitive group are related to the corresponding group action.

\begin{defi}\label{defn_AST}
\cite{mesner_association_1990,Zealand2021} Let $\Omega$ be a finite set with at least 3 elements. An association scheme on triples (AST) on $\Omega$ is a partition $X=\lr{R_i}_{i=0}^m$ of $\Omega \times \Omega \times \Omega$ with $m\geq 4$ such that the following hold.

\begin{enumerate}
    \item  For each $i\in \lr{0,\ldots,m}$, there exists an integer $n_i^{(3)}$ such that for each pair of distinct $x,y\in \Omega$, the number of $z\in \Omega$ with $(x,y,z)\in R_i$ is $n_i^{(3)}$.
    \item (Principal Regularity Condition.) For any $i,j,k,l \in \lr{0,\ldots,m}$, there exists a constant $p_{ijk}^l$ such that for any $(x,y,z)\in R_l$, the number of $w$ such that $(w,y,z)\in R_i$, $(x,w,z)\in R_j$, and $(x,y,w)\in R_k$ is $p_{ijk}^l$.
    \item For any $i\in \lr{0,\ldots, m}$ and any $\sigma \in S_3$, there exists a $j\in \lr{0,\ldots,m}$ such that \[R_j= \lr{(x_{\sigma(1)},x_{\sigma(2)},x_{\sigma(3)}):(x_1,x_2,x_3)\in R_i}.\]
    \item The first four relations are $R_0=\lr{(x,x,x): x\in \Omega}$, $R_1=\lr{(x,y,y):x,y\in \Omega,x\neq y}$, $R_2=\lr{(y,x,y):x,y\in \Omega, x\neq y}$, and $R_3=\lr{(y,y,x):x,y\in \Omega, x\neq y}$.
\end{enumerate}
\end{defi}

The integer $n_i^{(3)}$ is the third valency of $R_i$, and is the analogue of valency from classical association schemes. By Conditions 1 and 3 of Definition \ref{defn_AST} there are for each $i$ the constants $n_i^{(1)}=\vert \lr{z\in \Omega : (z,x,y)\in R_i} \vert $ and $n_i^{(2)}=\vert \lr{z\in \Omega : (x,z,y)\in R_i} \vert $ independent of any pair of distinct $x,y\in \Omega$. Similarly, $n_i^{(1)}$ is the first valency of $R_i$ and $n_i^{(2)}$ is the second valency of $R_i$. The trivial relations are $R_0,R_1,R_2$ and $R_3$ while the other relations are the nontrivial relations. Further, the numbers $p_{ijk}^l$ are called the intersection numbers.

ASTs arise naturally from the actions of two-transitive groups \cite{mesner_association_1990}, mirroring how Schurian classical association schemes are induced by the actions of transitive groups \cite{bannai_algebraic_1984}. Indeed, when a two-transitive group $G$ acts on a set $\Omega$, the orbits of the induced action on $\Omega \times \Omega \times \Omega$ is an AST \cite{mesner_association_1990}. Correspondences between the action and the parameters of the induced AST are summarized in the following remark.
\begin{rem}[\cite{mesner_association_1990,balmafamily}] \label{rem_parameter}
Let $G$ be a group acting two-transitively on a set $\Omega$ and let $X$ be the AST induced by this action. For any pair of distinct elements $a,b\in \Omega$, the orbits of the two-point stabilizer $G_{a,b}$ on $\Omega\setminus \{a,b\}$ are in bijection with the nontrivial relations of the AST. As a consequence of this bijection, the sizes of these orbits are also the third valencies.    
\end{rem}

\subsection{Affine special groups}
Given a prime power $n$ and $k\geq 1$, the affine special linear group $ASL(k,n)$ is the semidirect product $GF(n)\rtimes SL(k,n)$, where $SL(k,n)$ is the group of invertible linear transformations on the $k$-dimensional vector space $V$ over $GF(n)$ of determinant 1. Explicitly, the affine special linear group is the following group of maps from $V$ to itself.
\[ASL(k,n) = \left\{ x\mapsto Ax +b : A \in SL(k,n), b\in V \right\}.\] 

Similarly, the affine special semilinear group $ASL(k,n) \rtimes Gal(GF(n))$ is the semidirect product of the affine special linear group $ASL(k,n)$ with the Galois group $Gal(GF(n))$. Explicitly, the affine special semilinear group is the following group of maps from $V$ to itself.
\[ASL(k,n) \rtimes Gal(GF(n)) = \left\{ x\mapsto A\phi(x) +b : A\in SL(k,n),b\in GF(n), \phi \in Gal(GF(n)) \right\}.\]

\section{ASTs from subgroups of the affine special semilinear group}
In this section we generalize work done in \cite{balmasurvey} by obtaining the sizes, third valencies, and intersection numbers of ASTs obtained from the actions of subgroups of the affine special semilinear group of the form \[ASL_H(k,n) = ASL(k,n )\rtimes H,\] where $k\geq 2$, $n=p^\alpha$ a power of a prime number $p$, and $H$ a subgroup of $Gal(GF(n))$. We obtain the sizes and third valencies of these ASTs by obtaining a two-point stabilizer of $ASL_H(k,n)$ and then determining its orbits. Finally, we obtain the intersection numbers of these ASTs through explicit orbit computations.

For ease of discussion, we fix the following notations. Let $n=p^\alpha$ be a power of a prime $p$, $k\geq 2$, $V$ be the $k$-dimensional vector space over $GF(n)$, $H$ be a subgroup of $Gal(GF(n))$, and $X$ be the AST obtained from $ASL_H(k,n)$. For $a\in GF(n)$, let $\vec{a}=(a,0,\ldots,0)^T\in V$. Further, 
for $(u,v,w)\in V\times V\times V$, let $[(u,v,w)]\in X$ denote the orbit of $(u,v,w)$ under $ASL_H(k,n)$.

 We begin with the case where $k=2$. To determine the size and third valencies of $X$, we use Remark \ref{rem_parameter} exploit the relationships between these parameters and the orbits of a two-point stabilizer of $ASL_H(k,n)$.

\begin{thm} \label{thm_asl2}
Let $n=p^\alpha$ be a power of a prime $p$, $q=p^\omega$ with $\omega|\alpha$, $H= Gal_{GF(q)}(GF(n))$ and $X$ be the AST obtained from the action of $ASL_H(2,n)$ on the $2$-dimensional vector space $V$ over $GF(n)$.
The two-point stabilizer $ASL_H(2,n)_{\vec{0},\vec{1}}$ has the following orbits on $V\setminus \lr{\vec{0},\vec{1}}$.
\begin{enumerate}
    \item There are $-2 + \frac{\omega}{\alpha} \sum_{\beta=1}^{\frac{\alpha}{\omega}}q^{\gcd{(\frac{\alpha}{\omega},\beta)}}$ orbits of the form \[\left\{ \vec{\phi(a)}: \phi \in H   \right \},\;a\neq 0,1\] each of size $\deg_{GF(q)}(a)$.
    \item There are $-1 + \frac{\omega}{\alpha} \sum_{\beta=1}^{\frac{\alpha}{\omega}}q^{\gcd{(\frac{\alpha}{\omega},\beta)}}$ orbits of the form \[ \left\{ (c,\phi(\mathfrak{a}))^T: c\in GF(n), \phi \in H   \right \},\;\mathfrak{a}\neq 0\] each of size $n\deg_{GF(q)}(a)$.
\end{enumerate}

\end{thm}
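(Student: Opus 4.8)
The plan is to first determine the two-point stabilizer $ASL_H(2,n)_{\vec 0,\vec 1}$ explicitly, and then read off its orbits on $V\setminus\lr{\vec 0,\vec 1}$ by a direct coordinate computation. A general element of $ASL_H(2,n)$ has the form $x\mapsto A\phi(x)+b$ with $A\in SL(2,n)$, $b\in V$, $\phi\in H$. Since $H=Gal_{GF(q)}(GF(n))$ fixes $GF(q)\supseteq GF(p)$ pointwise, every $\phi\in H$ satisfies $\phi(\vec 0)=\vec 0$ and $\phi(\vec 1)=\vec 1$; hence fixing $\vec 0$ forces $b=\vec 0$, and then fixing $\vec 1$ forces $A\vec 1=\vec 1$, i.e.\ the first column of $A$ is $(1,0)^T$. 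Combined with $\det A=1$ this forces $A$ to be upper unitriangular with $(1,2)$-entry an arbitrary $c\in GF(n)$, so
\[ASL_H(2,n)_{\vec 0,\vec 1}=\Bigl\{\, x\mapsto \begin{pmatrix}1&c\\0&1\end{pmatrix}\phi(x)\ :\ c\in GF(n),\ \phi\in H\,\Bigr\}.\]

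Next I would compute the action of such an element on an arbitrary $v=(x,y)^T\in V$, obtaining $(\phi(x)+c\phi(y),\phi(y))^T$, and split into two cases according to the second coordinate. When $y=0$, so $v=\vec x$, the image is $\vec{\phi(x)}$; hence the orbit of $\vec a$ is $\lr{\vec{\phi(a)}:\phi\in H}$, which is the family in (1), the condition $a\neq 0,1$ being exactly $\vec a\notin\lr{\vec 0,\vec 1}$. When $y\neq 0$, then as $c$ ranges over $GF(n)$ the entry $\phi(x)+c\phi(y)$ ranges over all of $GF(n)$, so the orbit of $(x,y)^T$ equals $\lr{(c,\phi(y))^T:c\in GF(n),\phi\in H}$; writing $\mathfrak a=y\neq 0$ this is the family in (2), and since $\vec 0,\vec 1$ have vanishing second coordinate no further restriction on $\mathfrak a$ is needed. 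A quick count, $(n-2)+n(n-1)=n^2-2$, confirms that these two families together with the trivial orbits exhaust $V\setminus\lr{\vec 0,\vec 1}$.

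For the orbit sizes and counts I would use that $H$ is cyclic of order $\alpha/\omega$, generated by the Frobenius $\sigma\colon t\mapsto t^q$. The $H$-orbit of $a\in GF(n)$ has cardinality the least $d>0$ with $a\in GF(q^d)$, namely $\deg_{GF(q)}(a)$; this gives the stated sizes $\deg_{GF(q)}(a)$ in (1) and $n\deg_{GF(q)}(\mathfrak a)$ in (2). The orbits in (1) correspond bijectively to $H$-orbits on $GF(n)\setminus\lr{0,1}$ and those in (2) to $H$-orbits on $GF(n)\setminus\lr{0}$, so it remains to count $H$-orbits on $GF(n)$ and subtract the trivial ones. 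By Burnside's lemma this number is $\frac{\omega}{\alpha}\sum_{\beta=1}^{\alpha/\omega}\lvert\mathrm{Fix}(\sigma^\beta)\rvert$, and since $\mathrm{Fix}(\sigma^\beta)=\lr{t\in GF(n):t^{q^\beta}=t}=GF(q^\beta)\cap GF(p^\alpha)=GF(q^{\gcd(\alpha/\omega,\beta)})$ (using $\omega\mid\alpha$), we get $\lvert\mathrm{Fix}(\sigma^\beta)\rvert=q^{\gcd(\alpha/\omega,\beta)}$. Deleting the two singleton orbits $\lr{0},\lr{1}$ for (1) and the single orbit $\lr{0}$ for (2) yields the claimed counts.

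The computations here are essentially routine linear algebra over $GF(n)$ together with standard finite-field and Galois-theory facts, so I do not expect a genuine conceptual obstacle. The one place that needs care is the orbit count: one must correctly identify $\mathrm{Fix}(\sigma^\beta)$ as the subfield $GF(q^{\gcd(\alpha/\omega,\beta)})$ — which is where the hypothesis $\omega\mid\alpha$ actually enters — apply Burnside's lemma to the cyclic group $H$, and then remember to remove the trivial $H$-orbits corresponding to $\vec 0$ and $\vec 1$ when passing to $V\setminus\lr{\vec 0,\vec 1}$.
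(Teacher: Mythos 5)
Your proposal is correct and follows essentially the same route as the paper: identify the two-point stabilizer as the upper-unitriangular matrices composed with elements of $H$, read off the two orbit types by direct computation, get the sizes from Galois theory, and count orbits via Burnside's lemma applied to $H$ acting on $GF(n)$. Your write-up is in fact more detailed than the paper's (which compresses the stabilizer derivation, the orbit verification, and the Burnside computation into brief assertions), and your identification of $\mathrm{Fix}(\sigma^\beta)=GF(q^{\gcd(\alpha/\omega,\beta)})$ correctly supplies the step the paper leaves implicit.
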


\begin{proof}
The two-point stabilizer is \[ASL_H(2,n)_{\vec{0},\vec{1}} = \{(x,y)^T\mapsto \begin{bmatrix} 1 & c\\ 0 & 1\end{bmatrix}(\phi(x),\phi(y))^T:c \in GF(n), \phi \in H\}.\]

Direct computation shows that the orbits of $ASL_H(2,n)_{\vec{0},\vec{1}} $ have the following forms.    
\begin{enumerate}
    \item The first type of orbit has the form \[\{(\phi(a),0)^T:\phi \in H\}, \] which consists of those vectors whose second coordinate is 0 and whose first coordinate is a Galois conjugate of an element $a\in GF(n)$ with $a\neq0,1$. 
    \item The remaining orbits are of the form \[\{(x,\phi(\mathfrak{a}))^T: x\in GF(n),\phi \in H\}, \] which consists of those vectors whose second coordinate is a Galois conjugate of an element $\mathfrak{a}\in GF(n)$ with $\mathfrak{a}\neq0$. 
\end{enumerate}
The sizes of these orbits follow directly from the Fundamental Theorem of Galois Theory. The number of orbits of each type are then obtained through the Fundamental Theorem of Galois Theory and a straightforward application of Burnside's Orbit Counting Theorem to the action of $Gal(GF(n))$ on $GF(n)$. 
\end{proof}

As a consequence of Theorem \ref{thm_asl2}, we obtain the sizes and third valencies of the ASTs obtained from $ASL_H(2,n)$.

\begin{thm}

Let $n=p^\alpha$ be a power of a prime $p$, $q=p^\omega$ with $\omega|\alpha$, $H= Gal_{GF(q)}(GF(n))$ and $X$ be the AST obtained from the action of $ASL_H(2,n)$ on the $2$-dimensional vector space $V$ over $GF(n)$. Then $X$ has \(-3 + 2\left(\frac{\omega}{\alpha} \sum_{\beta=1}^{\frac{\alpha}{\omega}}q^{\gcd{(\frac{\alpha}{\omega},\beta)}}\right)\) nontrivial relations. 
There are $-2 + \frac{\omega}{\alpha} \sum_{\beta=1}^{\frac{\alpha}{\omega}}q^{\gcd{(\frac{\alpha}{\omega},\beta)}}$ nontrivial relations of the form \[R^a = \lr{[(\vec{0},\vec{1},\vec{a})]}, \; a\neq0,1,\] with corresponding third valency $\deg_{GF(q)}(a)$. The remaining $-1 + \frac{\omega}{\alpha} \sum_{\beta=1}^{\frac{\alpha}{\omega}}q^{\gcd{(\frac{\alpha}{\omega},\beta)}}$ nontrivial relations of $X$ are of the form  \[{}^\mathfrak{a}\!R=\lr{[(\vec{0},\vec{1},(0,\mathfrak{a})^T)]}, \; \mathfrak{a}\neq0,\] with corresponding third valency $n \deg_{GF(q)}(\mathfrak{a})$. 
\end{thm}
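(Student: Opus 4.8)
The plan is to derive this theorem as a direct corollary of Theorem~\ref{thm_asl2} together with Remark~\ref{rem_parameter}. By Remark~\ref{rem_parameter}, the nontrivial relations of $X$ are in bijection with the orbits of the two-point stabilizer $ASL_H(2,n)_{\vec 0,\vec 1}$ on $V\setminus\{\vec 0,\vec 1\}$, and under this bijection an orbit maps to its third valency. So the count of nontrivial relations is just the total number of orbits listed in Theorem~\ref{thm_asl2}: adding $\bigl(-2 + \tfrac{\omega}{\alpha}\sum_{\beta=1}^{\alpha/\omega} q^{\gcd(\alpha/\omega,\beta)}\bigr)$ from part~(1) and $\bigl(-1 + \tfrac{\omega}{\alpha}\sum_{\beta=1}^{\alpha/\omega} q^{\gcd(\alpha/\omega,\beta)}\bigr)$ from part~(2) gives $-3 + 2\bigl(\tfrac{\omega}{\alpha}\sum_{\beta=1}^{\alpha/\omega} q^{\gcd(\alpha/\omega,\beta)}\bigr)$, as claimed.

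Next I would pin down the explicit form of the relations. Each orbit $[(u,v,w)]$ of the two-point stabilizer $G_{\vec 0,\vec 1}$ on $V\setminus\{\vec 0,\vec 1\}$ corresponds to the relation $R = [(\vec 0,\vec 1,w)]$ under the full group $G = ASL_H(2,n)$, since $G$ is two-transitive and any triple with first two coordinates distinct can be moved so that they become $\vec 0,\vec 1$. For the first family of orbits in Theorem~\ref{thm_asl2}, namely $\{\vec{\phi(a)}:\phi\in H\}$ with $a\neq 0,1$, the associated relation is $R^a = [(\vec 0,\vec 1,\vec a)]$; there are $-2 + \tfrac{\omega}{\alpha}\sum_{\beta=1}^{\alpha/\omega} q^{\gcd(\alpha/\omega,\beta)}$ of these, and by Theorem~\ref{thm_asl2}(1) and Remark~\ref{rem_parameter} the third valency of $R^a$ equals the orbit size $\deg_{GF(q)}(a)$. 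For the second family, the orbits $\{(c,\phi(\mathfrak a))^T : c\in GF(n),\phi\in H\}$ with $\mathfrak a\neq 0$ correspond to the relations ${}^{\mathfrak a}\!R = [(\vec 0,\vec 1,(0,\mathfrak a)^T)]$, of which there are $-1 + \tfrac{\omega}{\alpha}\sum_{\beta=1}^{\alpha/\omega} q^{\gcd(\alpha/\omega,\beta)}$, each with third valency equal to the orbit size $n\deg_{GF(q)}(\mathfrak a)$.

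The one point requiring a small verification is that these descriptions are genuinely well-defined and exhaustive: I should check that $R^a$ depends only on the orbit of $a$ under $H$ (equivalently, on $\deg_{GF(q)}(a)$ together with which $H$-orbit $a$ lies in) and that ${}^{\mathfrak a}\!R$ likewise depends only on the $H$-orbit of $\mathfrak a$, so that the labels $a$ and $\mathfrak a$ range over orbit representatives without double-counting. This is immediate from the fact that, e.g., $(\vec 0,\vec 1,\vec a)$ and $(\vec 0,\vec 1,\vec{\phi(a)})$ lie in the same $G_{\vec 0,\vec 1}$-orbit for $\phi\in H$ by Theorem~\ref{thm_asl2}(1). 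I expect no real obstacle here — the entire argument is bookkeeping on top of Theorem~\ref{thm_asl2} — the only mild subtlety is making sure the $-2$ and $-1$ corrections (which come from excluding $a=0,1$ and $\mathfrak a = 0$ in the orbit count, already accounted for in Theorem~\ref{thm_asl2}) are carried through the sum correctly when combining the two families.
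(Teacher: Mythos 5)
Your proposal is correct and follows exactly the route the paper intends: the paper states this theorem without separate proof, presenting it as an immediate consequence of Theorem~\ref{thm_asl2} via the bijection in Remark~\ref{rem_parameter} between orbits of the two-point stabilizer and nontrivial relations, with orbit sizes giving the third valencies. Your bookkeeping of the two orbit families and the $-2$, $-1$ corrections matches the paper's counts.
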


For notational convenience, let $A^a$ denote the adjacency hypermatrix corresponding to the relation $R^a$ whenever $a\neq0,1$. Similarly, let ${}^\mathfrak{a} \! A$ denote the adjacency hypermatrix corresponding to the relation ${}^\mathfrak{a} \!R$ whenever $\mathfrak{a} \neq 0$. Further, let $T$ be a transversal of the orbits of $H$ on $GF(n)\setminus \{0\}$. The intersection numbers of the subalgebra generated by the adjacency hypermatrices of the nontrivial relations of $X$ are given in the next theorem. 

\begin{thm}\label{thm_intnums}
Let $n=p^\alpha$ be a power of a prime $p$, $q=p^\omega$ with $\omega|\alpha$, $H= Gal_{GF(q)}(GF(n))$ and $X$ be the AST obtained from the action of $ASL_H(2,n)$. The following equations hold for any $a,b,c\neq0,1$ and $\mathfrak{a},\mathfrak{b},\mathfrak{c}\neq 0$. 
\begin{enumerate}
    \item $A^a A^b A^c = \sum_{\ell \in T\setminus\{1\}} p_\ell A^\ell$, where \[p_\ell = \left| \left\{  \phi(c) : \phi\in H \text{ and } \left(\exists\psi,\tau\in H \right)\left[ (1-\phi(c))\tau(a)+\phi(c)=\ell=\phi(c)\psi(b)\right]  \right\}  \right|.\]

    \item $A^a A^b \,{}^\mathfrak{c}\!A= A^a\, {}^\mathfrak{c}\!A\, A^b={}^\mathfrak{c}\!A \, A^a  A^b=0  $.

    \item ${}^\mathfrak{a}\!A \, {}^\mathfrak{b}\! A \,A^c = \sum_{\ell \in T} p_\ell\; {}^\mathfrak{\ell}\!A$, where \[p_\ell = \left| \left\{  \phi(c) : \phi\in H \text{ and } \left(\exists\psi,\tau\in H \right)\left[ \frac{\tau{(\mathfrak{a})}}{1-\phi(c)}=\ell=\frac{\psi(\mathfrak{b})}{\phi(c)}\right]  \right\}  \right|.\]

    \item ${}^\mathfrak{a}\!A \, A^c\, {}^\mathfrak{b}\! A  = \sum_{\ell \in T} p_\ell\; {}^\mathfrak{\ell}\!A$, where \[p_\ell = \left| \left\{  \psi(\mathfrak{b}) : \psi\in H \text{ and } \left(\exists\phi,\tau\in H \right)\left[ \psi(\mathfrak{b})\phi(c)=\ell=\tau(\mathfrak{a})+\psi(\mathfrak{b})\right]  \right\}  \right|.\]

    \item $A^c\,{}^\mathfrak{a}\!A \, {}^\mathfrak{b}\! A  = \sum_{\ell \in T} p_\ell\; {}^\mathfrak{\ell}\!A$, where \[p_\ell = \left| \left\{  \psi(\mathfrak{b}) : \psi\in H \text{ and } \left(\exists\phi,\tau\in H \right)\left[ \psi(\mathfrak{b})(1-\phi(c))=\ell=\frac{\tau(\mathfrak{a})(\phi(c)-1)}{\phi(c)}\right]  \right\}  \right|.\]

    \item ${}^\mathfrak{a}\!A \, {}^\mathfrak{b}\! A \, {}^\mathfrak{c}\!A = \sum_{\ell\in T\setminus\{1\}}p_\ell A^\ell + \sum_{{\jmath}\in T} p_{\jmath} \;{}^{\jmath}\! A$, where
    \begin{align*}
       p_\ell &= q\left|\left\{ \phi(\mathfrak{c}):(\exists \psi, \tau \in H)\left[  \frac{\tau(\mathfrak{a})+\phi(\mathfrak{c})}{\phi{(\mathfrak{c})}}=d=-\frac{\psi(\mathfrak{b})}{\phi(\mathfrak{c})}\right]  \right\}\right|, \\
              p_\jmath &= \left|\left\{ \psi(\mathfrak{b}):(\exists \phi,\tau \in H)\left[ \tau(\mathfrak{a}) + \psi(\mathfrak{b})+\phi(\mathfrak{c}) = \jmath \right]  \right\}\right|.
    \end{align*}
\end{enumerate}

\end{thm}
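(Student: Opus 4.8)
The plan is to evaluate each intersection number $p_{ijk}^l$ directly from Definition~\ref{defn_AST}(2) by a counting argument, using two-transitivity to fix a convenient representative of $R_l$. Since $ASL(2,n)$ already acts transitively on ordered pairs of distinct vectors, every nontrivial relation contains the triple $(\vec{0},\vec{1},z_l)$ --- with $z_l=\vec{\ell}$ when $R_l=R^\ell$ and $z_l=(0,\mathfrak{\ell})^T$ when $R_l={}^\mathfrak{\ell}\!R$ --- and by the Principal Regularity Condition $p_{ijk}^l$ is the number of $w\in V$ with $(w,\vec{1},z_l)\in R_i$, $(\vec{0},w,z_l)\in R_j$, and $(\vec{0},\vec{1},w)\in R_k$. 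Moreover, for any triple $(u,v,v')$ with $u\neq v$, choosing $h\in ASL_H(2,n)$ with $h(u)=\vec{0}$ and $h(v)=\vec{1}$ reduces the question of which relation contains $(u,v,v')$ to the question of which orbit of $ASL_H(2,n)_{\vec{0},\vec{1}}$ contains $h(v')$, and those orbits are exactly the ones in Theorem~\ref{thm_asl2}. Running this for each of the eight possible generator-type patterns of a triple product --- which is how items (1)--(6) are organized --- proves the theorem, and shows along the way that the span of the nontrivial adjacency hypermatrices is closed under the triple product, hence is the subalgebra in question.

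First I would isolate the two structural facts doing most of the work. By Theorem~\ref{thm_asl2}, $(\vec{0},\vec{1},w)\in R^c$ forces $w=\vec{m}$ with $m$ an $H$-conjugate of $c$, so $w$ lies on the coordinate line $L=\lr{\vec{t}:t\in GF(n)}$, whereas $(\vec{0},\vec{1},w)\in{}^\mathfrak{c}\!R$ forces only $w=(e,m)^T$ with $m$ an $H$-conjugate of $\mathfrak{c}$ and $e\in GF(n)$ unrestricted. Secondly, any element of $ASL_H(2,n)$ carries affine spans to affine spans, and the affine span of $\lr{\vec{0},\vec{1},\vec{\ell}}$ is $1$-dimensional while that of $\lr{\vec{0},\vec{1},(0,\mathfrak{\ell})^T}$ is $2$-dimensional; comparing these dimensions determines which target relations can occur in each product (forcing $A^aA^bA^c\in\mathrm{span}\lr{A^\ell}$, forcing ${}^\mathfrak{a}\!A\,{}^\mathfrak{b}\!A\,A^c$, ${}^\mathfrak{a}\!A\,A^c\,{}^\mathfrak{b}\!A$ and $A^c\,{}^\mathfrak{a}\!A\,{}^\mathfrak{b}\!A$ into $\mathrm{span}\lr{{}^\mathfrak{\ell}\!A}$, and permitting both families in ${}^\mathfrak{a}\!A\,{}^\mathfrak{b}\!A\,{}^\mathfrak{c}\!A$), forces every product in item (2) to vanish because the collinearity patterns it imposes on $w,\vec{1},z_l$ and on $\vec{0},w,z_l$ are mutually inconsistent, and likewise eliminates all contributions of the four trivial relations.

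The core of the argument is then elementary linear algebra over $GF(n)$. With $w$ already constrained by the condition coming from $R_k$, to test $(w,\vec{1},z_l)\in R_i$ --- or $(\vec{0},w,z_l)\in R_j$ --- I would write the candidate map as $x\mapsto A\varphi(x)+b$ with $A\in SL(2,n)$ and $\varphi\in H$; the two equations fixing the images of the first two entries of the triple solve for $b$ and for the relevant column of $A$ as explicit expressions involving $\varphi$ and the parameters of $w$ and $z_l$, after which the remaining equation together with $\det A=1$ collapses to a single identity in $GF(n)$ relating $\varphi$, the chosen $H$-conjugate of $c$ (or of $\mathfrak{c}$), and one further element of $H$. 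Replacing the Galois automorphisms by their inverses and relabelling puts these identities into exactly the form displayed in items (1), (3), (4), (5), (6); summing the indicator of solvability over the distinct admissible conjugates --- and, when $z_l\notin L$, over the admissible values of the free parameter $e$ --- yields the stated $p_\ell$ and $p_\jmath$.

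I expect item (6), ${}^\mathfrak{a}\!A\,{}^\mathfrak{b}\!A\,{}^\mathfrak{c}\!A$, to be the main obstacle. There $w=(e,m)^T$ genuinely leaves $L$ and carries the parameter $e$, the right-hand side mixes both families of relations, and --- unlike items (3)--(5), where a second collinearity forces $e$ to a single value --- the anchor equations no longer force $g$ to preserve $L$, so $A$ must be reconstructed column by column and $\det A=1$ becomes a genuine relation coupling $e$ to the Galois twist. The delicate point is to count exactly how many pairs (free parameter $e$, $H$-conjugate of $\mathfrak{c}$) survive all three membership conditions, that is, to pin down the multiplicity appearing as the leading factor of $p_\ell$, while keeping straight which of $\phi,\psi,\tau$ indexes which of the three slots; once that bookkeeping is settled, the remaining cases are structurally lighter instances of the same routine.
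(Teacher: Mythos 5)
Your overall plan coincides with the paper's: fix the representative $(\vec{0},\vec{1},z_l)$ by two-transitivity, read off the admissible $w$ from the $R_k$-condition via the stabilizer orbits of Theorem~\ref{thm_asl2}, use the on-line/off-line (affine span) dichotomy to decide which $R_l$ can carry a nonzero coefficient --- this is exactly how the paper proves item (3) and disposes of item (2) --- and then count the admissible $w$ by linear algebra with $\det A=1$. For items (1)--(5) this routine, when actually executed, does reproduce the displayed conditions (in (4) and (5) the extra collinearity pins down the free coordinate $e$, as you say), so to that extent your proposal is a faithful, though uncomputed, outline of the paper's argument.

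The gap is that you stop exactly where the content lies, namely the count in item (6), and your forecast of what happens there is not what the computation gives. Writing $w=(e,m)^T$ with $m=\phi(\mathfrak{c})$, membership of $(w,\vec{1},z)$ and $(\vec{0},w,z)$ in the off-line relations is governed by the $SL(2,n)$-invariant $\det[P_2-P_1,\;P_3-P_1]$, whose $H$-orbit is the parameter of the relation; for the $A^\ell$-component, with $z=(\ell,0)^T$, the two conditions become ``$m(\ell-1)$ lies in the $H$-orbit of $\mathfrak{a}$'' and ``$-m\ell$ lies in the $H$-orbit of $\mathfrak{b}$''. These involve $e$ not at all, and $\det A=1$ does not ``couple $e$ to the Galois twist'' as you predict --- it only normalizes the second row, producing the invariant. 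Hence every admissible value of $m$ contributes all $n$ choices of $e$, so carrying out your own plan yields a coefficient $n\,\vert\lr{\phi(\mathfrak{c}):\cdots}\vert$, which agrees with the stated $q\,\vert\lr{\phi(\mathfrak{c}):\cdots}\vert$ only when $H$ is trivial (so $q=n$); likewise the ${}^{\jmath}\!A$-coefficient comes out as a count of pairs $(\psi(\mathfrak{b}),\phi(\mathfrak{c}))$ with $\jmath-\psi(\mathfrak{b})-\phi(\mathfrak{c})$ in the $H$-orbit of $\mathfrak{a}$ (the value of $e$ being determined by $\psi(\mathfrak{b})$), not merely a count of the values $\psi(\mathfrak{b})$. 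So the ``delicate bookkeeping'' you defer is precisely the step at which the argument must either produce the stated leading factor and single-variable count or conclude that the displayed formula needs adjusting; leaving it open means the proposal does not yet establish item (6) as stated, and the other items, though lighter, are likewise only described rather than verified.
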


\begin{proof}
We prove only the third statement, as the other statements are shown similarly. With $R_i= {}^\mathfrak{a}\!R$, $R_j= {}^\mathfrak{b}\!R$, and $R_k=R^c$, we determine the $R_\ell$ such that the intersection number $p_{ijk}^\ell$ is nonzero. If $R_\ell= {}^d\!R$ for some $d\neq 0$, considering the viable $w$ as in the the Principal Regularity Condition from Definition \ref{defn_AST} necessitates that $\phi (c) \psi(b) =0$ for some $\phi,\psi \in H$, which is impossible. If $R_\ell = R^d$ for some $d\neq0,1$, the Principal Regularity Condition says that the number $p_{ijk}^\ell$ of viable $w$ is the number of vectors of the form $(\phi(c),0)^T$ with $\phi \in H$ such that there are $\psi$ and $\tau$ in $H$ that satisfy $\dfrac{\tau{(\mathfrak{a})}}{1-\phi(c)}=\ell=\dfrac{\psi(\mathfrak{b})}{\phi(c)}$  
\end{proof}

The succeeding theorem gives the intersection numbers $p_{ijk}^l$ of the ASTs obtained from $ASL_H(2,q)$ whenever exactly one of $R_i$, $R_j$, and $R_k$ is trivial. Here $I_1$, $I_2$, and $I_3$ denote the respective adjacency hypermatrices of the trivial relations $R_1$, $R_2$, and $R_3$ of $X$. The proof is omitted, being similar to that of the proof of Theorem \ref{thm_intnums}.

\begin{thm}
Let $n=p^\alpha$ be a power of a prime $p$, $q=p^\omega$ with $\omega|\alpha$, $H= Gal_{GF(q)}(GF(n))$ and $X$ be the AST obtained from the action of $ASL_H(2,q)$. The following equations hold for any $a,b\neq0,1$ and $\mathfrak{a},\mathfrak{b}\neq 0$. 
\begin{enumerate}
\item $I_1 A^a A^b = p I_1$, where  \[p^1 = \vert \lr{\psi(b) : \psi \in H \text{ and } (\exists \tau\in H)[\tau(a) \psi(b)=1]}\vert .\]
    
\item $A^a I_2 A^b = p^2 I_2$, where \[p^2 = \vert \lr{\psi(b) : \psi \in H \text{ and } (\exists \tau\in H)[\tau(a) \psi(b)=\tau(a)+ \psi(b)]}\vert .\]
    
\item$ A^a A^b I_3 = p^3 I_3$, where \[p^3 =  \vert \lr{\psi(b) : \psi \in H \text{ and } (\exists \tau\in H)[\tau(a)+ \psi(b)=1]}\vert . \]
    \item $I_1 A^a\, {}^{\mathfrak{a}}\!A = I_1 \, {}^{\mathfrak{a}}\!A\, A^a  = A^a I_2 \, {}^{\mathfrak{a}}\!A = \, {}^{\mathfrak{a}}\!A\, I_2 A^a =A^a \, {}^{\mathfrak{a}}\!A\, I_3 = \, {}^{\mathfrak{a}}\!A\, A^a I_3 =0$.
    
    \item $I_1 \, {}^{\mathfrak{a}}\!A\, {}^{\mathfrak{b}}\!A = p^*  I_1$,  $ {}^{\mathfrak{a}}\!A\, I_2 \, {}^{\mathfrak{b}}\!A = p^*  I_2$, $ {}^{\mathfrak{a}}\!A\, {}^{\mathfrak{b}}\!A\,I_3  = p^*  I_3$, where \[p^* = q\left| \left\{ \psi(\mathfrak{b}): \psi\in H \text{ and } (\exists \tau \in H)[\tau(\mathfrak{a})=-\psi(\mathfrak{b})]   \right\} \right|.\]

\end{enumerate}

\end{thm}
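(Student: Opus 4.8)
The plan is to apply the Principal Regularity Condition of Definition~\ref{defn_AST} directly, as in the proof of Theorem~\ref{thm_intnums}; the one new feature is that exactly one of the three relations $R_i,R_j,R_k$ in each product is trivial, and this is what makes the computation collapse. I would first record that every $R^a$ and every ${}^{\mathfrak a}\!R$ consists of triples with pairwise distinct entries, so a trivial factor among $R_i,R_j,R_k$ forces a repeated coordinate in any triple $(x,y,z)\in R_l$ with $p_{ijk}^l\neq 0$: if $R_i=R_1$ then $(w,y,z)\in R_1$ gives $y=z$ and hence $l=1$; if $R_j=R_2$ then $x=z$ and $l=2$; if $R_k=R_3$ then $x=y$ and $l=3$; and the all-equal case $l=0$ is then excluded because it would make one of the remaining (nontrivial) membership conditions on $w$ ask for a triple with a repeated coordinate. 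Thus each product in the statement reduces to a scalar multiple of a single one of $I_1,I_2,I_3$, for which a convenient representative of $R_l$ is $(\vec 0,\vec 1,\vec 1)$, $(\vec 0,\vec 1,\vec 0)$, or $(\vec 0,\vec 0,\vec 1)$ respectively, each lying in the relevant trivial relation by Condition~4 of Definition~\ref{defn_AST}.

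Next, for each product I would fix that representative and count the admissible $w$ of the Principal Regularity Condition. Of the three conditions on $w$, the one coming from the trivial factor only requires $w$ to avoid a single vector, and this is automatic once the two nontrivial conditions hold. Each nontrivial condition asks that some triple built from $\vec 0$, $\vec 1$, and $w$ lie in the orbit of $(\vec 0,\vec 1,\vec a)$, of $(\vec 0,\vec 1,(0,\mathfrak a)^T)$, or of their analogues; I would treat it by writing an element $x\mapsto A\phi(x)+v$ of $ASL_H(2,n)$ realizing the orbit equivalence, solving for the translation $v$ and the two columns of $A$ from the three image equations, and then imposing $\det A=1$. When the relevant triple is literally $(\vec 0,\vec 1,w)$, one may instead read the possible $w$ straight off Theorem~\ref{thm_asl2}. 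Combining the two nontrivial conditions then collapses to a single equation among the Galois conjugates (some $\phi(b)$ or $\phi(\mathfrak b)$, $\psi(a)$ or $\psi(\mathfrak a)$, and $\tau(\cdot)$) displayed in the statement, and counting distinct admissible values of the relevant conjugate yields $p^1,p^2,p^3,p^*$. Two phenomena still need separate attention: in case~(4), one nontrivial condition forces the second coordinate of $w$ to be $0$ while the other forces it to be a nonzero Galois conjugate of $\mathfrak a$, so no $w$ exists and the product vanishes; and in case~(5), where both nontrivial relations are of type ${}^{\mathfrak a}\!R$, the determinant equation constrains only the second coordinate of $w$, leaving the first coordinate free to range over all of $GF(n)$, which produces the extra multiplicative factor in $p^*$.

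I expect the main work to be organizational rather than conceptual: for each of the roughly a dozen sub-products one must keep straight which coordinate of the representative is $\vec 0$ and which is $\vec 1$, and turn each orbit-membership statement into the correct linear system for $A\in SL(2,n)$ and $v\in V$. The determinant-one requirement is exactly what distinguishes this computation from the $AGL$ analogue treated in \cite{balmafamily}; in each case it turns out to be satisfiable, since $b\neq 0,1$ and $\mathfrak a\neq 0$ leave a free entry of $A$ with which to normalize the determinant, so it contributes a genuine equation rather than an obstruction --- but this consistency has to be checked every time. A secondary, routine point, handled via the Fundamental Theorem of Galois Theory just as in Theorem~\ref{thm_asl2}, is to verify that counting distinct values of a single conjugate (instead of triples $(\phi,\psi,\tau)\in H^3$) genuinely recovers the cardinality of the set displayed in each formula.
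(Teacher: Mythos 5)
Your approach is the same as the paper's: the proof of this theorem is omitted there precisely because it repeats the argument of Theorem~\ref{thm_intnums} --- force the output relation $R_l$ to be the trivial one determined by the trivial factor (excluding $l=0$ and the nontrivial $l$ exactly as you do), fix a representative such as $(\vec{0},\vec{1},\vec{1})$, $(\vec{0},\vec{1},\vec{0})$ or $(\vec{0},\vec{0},\vec{1})$, turn each nontrivial orbit-membership condition on $w$ into a linear system for an element of $ASL_H(2,n)$ with the determinant-one constraint, and count the admissible $w$. Your reductions, the vanishing in case (4) (second coordinate forced to be $0$ by the $R^a$-type condition and nonzero by the ${}^{\mathfrak{a}}\!R$-type condition), and the counts in cases (1)--(3) all check out, including the bijection between admissible $w$ and distinct values of the single conjugate $\psi(b)$.

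One point needs attention in case (5). Your own computation shows that both nontrivial conditions constrain only the second coordinate of $w$ (it must simultaneously be of the form $\psi(\mathfrak{b})$ and $-\tau(\mathfrak{a})$), leaving the first coordinate free over $GF(n)$; this yields a multiplicative factor of $n=p^{\alpha}$, i.e. $p^*=n\,\bigl\lvert\lr{\psi(\mathfrak{b}):\psi\in H \text{ and } (\exists\tau\in H)[\tau(\mathfrak{a})=-\psi(\mathfrak{b})]}\bigr\rvert$, whereas the displayed coefficient is $q=p^{\omega}$. You assert that the free coordinate ``produces the extra multiplicative factor in $p^*$'' without observing that the factor you actually obtain is $n$, not $q$; unless $H$ is trivial these differ. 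The same tension appears in the coefficient $q$ of item (6) of Theorem~\ref{thm_intnums} and in this statement's ``$ASL_H(2,q)$'' (elsewhere the action is on $V$ over $GF(n)$), so this is most plausibly a misprint in the paper rather than a flaw in your computation --- but as written your argument proves the formula with $n$ in place of $q$, and you should either justify the $q$ or flag the discrepancy explicitly instead of eliding it.
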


We next consider the AST obtained from $ASL_H(k,n)$ for $k\geq 3$, $n$ a prime power, and $H$ a subgroup of $Gal(GF(n))$. The following theorem tells us that the AST obtained from $ASL_H(k,n)$ is the same as the AST obtained from the subgroup $A G L_H(k,n) = AGL(k,n) \rtimes H$ of the affine semilinear group $A\Gamma L(k,n)$ whenever $k\geq 3$. In particular, the parameters of these ASTs have already been obtained in \cite{balmasurvey}. 

\begin{thm}
 Let $n=p^\alpha$ be a power of a prime $p$, $q=p^\omega$ with $\omega|\alpha$, and $H= Gal_{GF(q)}(GF(n))$. Then the AST obtained from the action of $ASL_H(k,n)$ is equal to the AST obtained from the action of $AG L_H(k,n)$.   
\end{thm}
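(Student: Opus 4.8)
The plan is to show that the two group actions induce the same partition of $V \times V \times V$ by proving they have the same orbits, and since $ASL_H(k,n) \leq AGL_H(k,n)$, it suffices to show that every $AGL_H(k,n)$-orbit is already a single $ASL_H(k,n)$-orbit. Because both groups act transitively on ordered pairs of distinct points (both contain $AGL(k,n)$, which is two-transitive, for $k \geq 2$), by Remark \ref{rem_parameter} the orbits of each AST on $V \times V \times V$ correspond to the orbits of the respective two-point stabilizer on $V \setminus \{\vec 0, \vec 1\}$ (after fixing a base pair, say $\vec 0$ and $\vec 1$). So the heart of the matter is: the two-point stabilizers $ASL_H(k,n)_{\vec 0, \vec 1}$ and $AGL_H(k,n)_{\vec 0, \vec 1}$ have the same orbits on $V \setminus \{\vec 0, \vec 1\}$.

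First I would write down both stabilizers explicitly. A map $x \mapsto A\phi(x) + b$ fixes $\vec 0$ iff $b = \vec 0$, and then fixes $\vec 1 = (1,0,\ldots,0)^T$ iff $A$ fixes the vector $\vec 1$ (since $\phi(\vec 1) = \vec 1$). Thus $AGL_H(k,n)_{\vec 0,\vec 1} = \{x \mapsto A\phi(x) : A \in GL(k,n),\ A\vec 1 = \vec 1,\ \phi \in H\}$ and $ASL_H(k,n)_{\vec 0,\vec 1}$ is the subgroup with the extra condition $\det A = 1$. The key linear-algebra fact is that the stabilizer of a nonzero vector in $SL(k,n)$ already acts transitively on the complement of the line through that vector in the same way $GL(k,n)$'s stabilizer does — more precisely, for $k \geq 3$ (and actually $k \geq 2$ for the field being large enough, but the clean statement needs $k \geq 3$), given any $A \in GL(k,n)$ with $A\vec 1 = \vec 1$ and any target vector $v$, one can find $A' \in SL(k,n)$ with $A'\vec 1 = \vec 1$ and $A' v$ equal to the same point $Av$. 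The mechanism: the stabilizer of $\vec 1$ in $GL(k,n)$ surjects onto $GL(k-1,n)$ (acting on the quotient $V / \langle \vec 1\rangle$) with kernel the translations, and since $k - 1 \geq 2$, one can always adjust by an element of $SL$ fixing $\vec 1$ — concretely, multiply by a transvection or a diagonal matrix supported away from the first coordinate to correct the determinant without moving $\vec 1$ or changing the image point. I would prove this correction lemma first, then conclude that the $ASL_H(k,n)_{\vec 0,\vec 1}$-orbit of any $v$ equals its $AGL_H(k,n)_{\vec 0,\vec 1}$-orbit.

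With that in hand, the two two-point stabilizers have identical orbit partitions of $V \setminus \{\vec 0, \vec 1\}$, hence by Remark \ref{rem_parameter} the two ASTs have the same nontrivial relations; the trivial relations $R_0, \ldots, R_3$ coincide by definition. Therefore the ASTs are equal. I would phrase the final argument carefully: orbits on triples $[(u,v,w)]$ are determined by moving the pair $(u,v)$ to the base pair $(\vec 0, \vec 1)$ via two-transitivity and then recording the stabilizer-orbit of the image of $w$; since both steps agree for the two groups, the triple-orbit partitions agree.

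The main obstacle I anticipate is the determinant-correction lemma in the edge case $k = 3$ and small fields (e.g., $n = 2$, where $SL(k,n) = GL(k,n)$ anyway, or $n = 3$), and more substantively making the argument uniform: I need that for $k \geq 3$ the group of matrices fixing $\vec 1$ and having determinant $1$ still acts on $V \setminus \langle \vec 1 \rangle$ with the same orbits as the full stabilizer. The cleanest route is probably to exhibit, for any desired determinant $\delta \in GF(n)^\times$, an explicit element of $GL(k,n)$ fixing $\vec 1$ pointwise-on-the-first-coordinate, fixing a chosen complementary vector, and having determinant $\delta$ — for instance a diagonal matrix $\mathrm{diag}(1, \delta, \delta^{-1}, 1, \ldots, 1)$, which needs $k \geq 3$ — and observe it can be inserted to fix up any determinant discrepancy without disturbing $\vec 1$ or the image of $v$, by choosing the complementary vector appropriately (if $v \in \langle \vec 1\rangle$ a separate, easier argument applies since then $Av \in \langle\vec 1\rangle$ and one corrects in the last two coordinates). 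Handling the case $v \in \langle \vec 1 \rangle$ versus $v \notin \langle \vec 1 \rangle$ separately, and checking the diagonal-correction trick goes through in each, is where the real bookkeeping lives; everything else is formal.
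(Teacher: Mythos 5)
Your proposal follows essentially the same route as the paper: reduce via two-transitivity and Remark \ref{rem_parameter} to comparing the orbits of the two two-point stabilizers on $V\setminus\{\vec{0},\vec{1}\}$, and then show these orbit partitions coincide, the only substantive point being that the stabilizer of $\vec{1}$ in $SL(k,n)$ still acts transitively off $\mathrm{Span}(\vec{1})$ when $k\geq 3$ (the paper dispatches this as a ``direct computation''; your determinant-correction lemma is exactly the right way to fill it in). One small slip: your candidate correction matrix $\mathrm{diag}(1,\delta,\delta^{-1},1,\ldots,1)$ has determinant $1$, not $\delta$, so it cannot absorb a determinant discrepancy; you want something like $\mathrm{diag}(1,1,\delta,1,\ldots,1)$ written in a basis containing $\vec{1}$ and your chosen complementary vector.
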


\begin{proof}
Notice that if a group $G$ and a subgroup $K$ of $G$ both act two-transitively on a set, then the orbits of $G$ are unions of orbits of $K$. In particular, if $G$ and $K$ have the same number of orbits, then these orbits are the same. Thus, it suffices to show that the ASTs obtained from $ AG L_H(k,n)$ and $ASL_H(k,n)$ have the same size. By Remark \ref{rem_parameter}, it suffices to show that the two-point stabilizer $ASL_H(k,n)_{\vec{0},\vec{1}}$ has the same orbits as $AG L_H(k,n)_{\vec{0},\vec{1}}$ on $GF(n)\setminus \{\vec{0},\vec{1}\}$.    

Indeed, the two-point stabilizers above are given by \[ASL_H(k,n)_{\vec{0},\vec{1}} = \{v\mapsto A\phi(v):A\in SL(k,n), \phi \in H\},\] and
\[AG L_H(k,n)_{\vec{0},\vec{1}} = \{v\mapsto A\phi(v):A\in GL(k,n), \phi \in H\}.\]

Direct computation shows that the orbits of $ASL_H(k,n)_{\vec{0},\vec{1}} $ have the following forms. 
\begin{enumerate}
    \item One type of orbit has the form \[\{(\phi(a),0,\ldots,0)^T:\phi \in H\}, \] which consists of those vectors whose first coordinate is a Galois conjugate of an element $a\in GF(n)$ with $a\neq0,1$. The other coordinates are 0.
    \item The remaining orbit is \[(GF(n))^k\setminus Span(\vec{1}),\] consisting of the vectors linearly independent from $\vec{1}$. 
\end{enumerate}

These are also the orbits of $AGL_H(k,n)_{\vec{0},\vec{1}}$, completing the proof.
\end{proof}


\bibliographystyle{amsplain}
 \bibliography{asl}





\end{document}